\documentclass[fleqn,preprint,3p,a4paper]{elsarticle}

\usepackage{amsmath,amssymb,amsthm,mathtools}
\usepackage{microtype}

\newtheorem{theorem}{Theorem}[section]

\newtheorem{lemma}[theorem]{Lemma}
\newtheorem{corollary}[theorem]{Corollary}
\theoremstyle{definition}

\newtheorem{example}[theorem]{Example}
\newtheorem{remark}[theorem]{Remark}

\newcommand{\PS}{P_{\!S}}
\newcommand{\Q}{\mathcal Q}
\newcommand{\KIrr}{\operatorname{K\!\text{-}Irr}}
\newcommand{\OO}{\mathcal O}
\newcommand{\cl}{\operatorname{cl}}
\newcommand{\up}[1]{\mathop{\uparrow}\!_{#1}}

\journal{Topology and its Applications}

\begin{document}
\begin{frontmatter}

\title{Compact localization and co-sobriety of Smyth power spaces\tnoteref{t1}}
\tnotetext[t1]{This research was supported by the National Natural Science Foundation of China  (Nos. 12471070, 12071199, 12561083).}

\author[1]{Wei Ji}
\ead{javeey@163.com}
\author[2]{Xiaoquan Xu\corref{cor1}}
\cortext[cor1]{Corresponding author}
\ead{xiqxu2002@163.com}

\address[1]{School of Mathematics and Statistics, Guilin University of Technology, Guilin 541004, China}
\address[2]{School of Computer Information Engineering, Nanchang Institute of Technology, Nanchang 330044, China}

\begin{abstract}
We establish a compact-localization theorem for finite iterates of the Smyth power space. Successive union maps assign to every finite-level Smyth object a canonical least compact saturated carrier in the original space, and co-sobriety at each positive finite level is equivalent to co-sobriety of the corresponding iterates over all compact saturated carriers. Consequently, if every compact saturated subspace of a $T_0$-space $X$ is stably compact---in particular, if every such subspace is Hausdorff---then $\PS^n(X)$ is co-sober for every finite $n\geq 1$. This applies to every $k_1$-Hausdorff space and hence to every Hausdorff or metrizable space. For the first iterate, we also give a direct full-Vietoris argument, independent of stable compactness, which shows that each $k$-irreducible compact saturated family is generated by its union.
\end{abstract}

\begin{keyword}
Co-sober space \sep compact localization \sep compact saturated set \sep Smyth power space \sep $k_1$-Hausdorff space \sep stably compact space
\MSC[2020] 54B20 \sep 54D10 \sep 54D30 \sep 06B35
\end{keyword}

\end{frontmatter}

\section{Introduction}

The Smyth power construction is one of the standard powerdomain constructions in domain theory and non-Hausdorff topology; see, for example, \cite{AbramskyJung,GierzEtAl,GoubaultBook,Schalk,XuWenXi}. For a $T_0$-space $X$, its Smyth power space $\PS(X)$ consists of the nonempty compact saturated subsets of $X$, equipped with the upper Vietoris topology. Several fundamental topological properties interact well with this construction: sobriety is preserved and reflected \cite{HeckmannKeimel,Schalk}, while stable compactness is preserved \cite{Goubault2010}.

Co-sobriety behaves more delicately. It was introduced by Escard\'o, Lawson and Simpson \cite{EscardoLawsonSimpson} in connection with dual forms of the Hofmann--Mislove theorem and compactly generated spaces. A nonempty compact saturated subset is called $k$-irreducible if it is not the union of two proper compact saturated subsets, and a $T_0$-space is co-sober if every $k$-irreducible compact saturated subset is the saturation of a unique point. Despite the formal duality with sobriety, the two notions differ substantially: a sober space need not be co-sober \cite{ShenWuXiZhao,WenXu}. More recently, Xu, Xu, Ji and Qiu constructed a co-sober $T_1$-space whose Smyth power space is not co-sober \cite{XuXuJiQiu}, so co-sobriety is not preserved by the Smyth power construction in general.

Positive preservation results therefore require additional structure. Xiao and Jia recently established a preservation-and-reflection theorem for coherence together with weak Hausdorffness \cite{XiaoJia}. That result does not by itself settle the Hausdorff case, since the available co-sobriety criterion also involves local compactness, whereas Hausdorff spaces need not be locally compact. The Hausdorff and metrizable cases were also explicitly raised in a recent invited presentation by Xu.\footnote{X.~Xu, \emph{Some open problems on co-sober spaces}, invited academic presentation at the 11th International Symposium on Domain Theory and its Applications, Chengdu, China, 4--8 August 2026.}

The main tool of this paper is \emph{compact localization}. Repeated union flattens an object of a finite Smyth iterate to a compact saturated subset of the original space. We prove that this subset is its canonical least carrier: for every finite $n\geq1$, an object $A\in\PS^n(X)$ belongs to $\PS^n(K)$ precisely when its carrier is contained in $K$. This yields the localization equivalence
\[
 \PS^n(X)\text{ is co-sober}
 ~\Longleftrightarrow~
 \PS^n(K)\text{ is co-sober for every }K\in\Q(X).
\]
Thus the global problem is reduced to compact saturated subspaces of the original space.

This localization immediately gives a general sufficient condition: if every $K\in\Q(X)$ is stably compact, then every positive finite Smyth iterate of $X$ is co-sober. In particular, it is enough that every compact saturated subspace of $X$ be Hausdorff. Hence every $k_1$-Hausdorff space has co-sober finite Smyth iterates, and the same is true, in particular, for every Hausdorff or metrizable space.

For the first iterate we give a second, direct argument on compact Hausdorff carriers that does not use preservation of stable compactness. An upper-Vietoris compact saturated family is closed in the full Vietoris hyperspace. A finite decomposition argument then shows that every $k$-irreducible family contains its union. Consequently, whenever every compact saturated subspace of $X$ is Hausdorff and $\mathcal A\in\KIrr(\PS(X))$,
\[
  \mathcal A=\up{\PS(X)}\!\left(\bigcup\mathcal A\right),
\]
so the unique generating point is identified explicitly.

Section~2 fixes notation and recalls the required background. Section~3 develops iterated carriers and proves the compact-localization theorem. Section~4 gives the direct Vietoris argument for compact Hausdorff carriers. Section~5 derives the stable-compact-carrier criterion and its $k_1$-Hausdorff, Hausdorff and metrizable consequences. The final section records the scope and limitations of the method.

\section{Preliminaries}

Let $X$ be a $T_0$-space. Its specialization order is
\[
   x\leq_X y ~\Longleftrightarrow~ x\in\cl_X\{y\}.
\]
For $A\subseteq X$, write
\[
   \up{X}A=\{x\in X:a\leq_Xx\text{ for some }a\in A\}.
\]
A subset is saturated if it is an upper set in the specialization order, equivalently, an intersection of open sets. We write $\Q(X)$ for the set of all nonempty compact saturated subsets of $X$. The space $X$ is called \emph{coherent} if the intersection of any two compact saturated subsets is compact.

A member $G\in\Q(X)$ is \emph{$k$-irreducible} if
\[
   G=G_1\cup G_2,~ G_1,G_2\in\Q(X),
\]
implies $G=G_1$ or $G=G_2$. The set of all such members is denoted by $\KIrr(X)$. The space $X$ is \emph{co-sober} if every $G\in\KIrr(X)$ has the form $G=\up{X}x$ for a unique $x\in X$.

\begin{lemma}[{\cite[Proposition~4.7 and the remark below Definition~3]{ZhaoHeWang}}]\label{lem:lc-coherent}
Every locally compact coherent $T_0$-space is co-sober.
\end{lemma}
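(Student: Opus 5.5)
We prove the statement directly. Fix a locally compact coherent $T_0$-space $X$ and some $G\in\KIrr(X)$. We must find a point $x$ with $G=\up{X}x$. Uniqueness is automatic: if $\up{X}x=\up{X}y$, then $x\leq_X y\leq_X x$, and the $T_0$ axiom gives $x=y$.

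\emph{Step 1.} Let $\min G$ be the set of minimal elements of $G$. Since $G$ is compact, Zorn's lemma applied to $\leq_X$ (filtered intersections of closures of points meeting $G$) gives $G=\up{X}\min G$. If $\min G$ is a singleton $\{x\}$, then $G=\up{X}x$ and we are done. So suppose $\min G$ contains two distinct points $a\neq b$. After swapping them if necessary, $a\not\leq_X b$, so $b\notin\up{X}a$. Moreover $a\notin\up{X}b$ by minimality of $a$.

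\emph{Step 2.} This is the main step: use the two points to split $G$ into two proper compact saturated pieces. Since $a\not\leq b$, there is an open set $U$ with $a\in U$ and $b\notin U$. By local compactness, choose a compact saturated $K$ and an open $V$ with $a\in V\subseteq K\subseteq U$. Put $G_1=G\cap K$. By coherence $G_1$ is compact, and it is saturated as an intersection of saturated sets. It contains $a$, so it is nonempty. It is proper in $G$, because $b\notin U\supseteq K$.

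Put $G_2=G\setminus V$. This is a closed subset of the compact set $G$, hence compact. It is nonempty since $b\in G_2$, and it is proper in $G$ since $a\in V$.

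\emph{Step 3.} Clearly $G=G_1\cup G_2$, because every point of $G$ either lies in $V\subseteq K$ or lies outside $V$. The remaining difficulty is that $G_2$ need not be saturated. We therefore replace it by $G_2'=\up{X}G_2$. This set is compact saturated, since the saturation of a compact set is compact. It lies inside $G$, because $G$ is an upper set.

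We must check that $G_2'$ is still proper, i.e.\ that $a\notin\up{X}(G\setminus V)$. Suppose $g\leq_X a$ for some $g\in G$. Minimality of $a$ forces $g=a$, and $a\in V$, so $g\notin G\setminus V$. Hence $a\notin G_2'$.

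We now have $G=G_1\cup G_2'$ with $G_1,G_2'\in\Q(X)$ and both proper subsets of $G$. This contradicts $k$-irreducibility of $G$. Therefore $\min G$ is a singleton, and $G=\up{X}x$ for a unique $x\in X$, as required.
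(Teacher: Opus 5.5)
Your argument is correct. It necessarily differs from the paper, because the paper gives no proof of Lemma~\ref{lem:lc-coherent}: it imports the result from Zhao--He--Wang (Proposition~4.7 together with a remark on their definitions), so at this point the paper rests on that source and its conventions.

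Your proof is self-contained and elementary. You show directly that a compact saturated $G$ with a minimal point $a$ and some $b\in G$ with $a\not\leq_X b$ splits as
\[
G=(G\cap K)\cup\up{X}(G\setminus V),
\]
where $K$ is a compact saturated neighbourhood of $a$ with $a\in V\subseteq K$ and $b\notin K$. Coherence makes $G\cap K$ compact, and minimality of $a$ keeps $a$ out of $\up{X}(G\setminus V)$, so both pieces are proper.

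What your route buys is transparency about the hypotheses:
\begin{itemize}
\item local compactness is used only at minimal points of $G$;
\item coherence is used only for intersections with $G$;
\item no sobriety or well-filteredness enters;
\item the generating point is identified as the unique minimal element of $G$, in the same spirit as the explicit generator $\bigcup\mathcal A$ in Theorem~\ref{thm:compact-Hausdorff}.
\end{itemize}

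Two small remarks. First, the minimality of $b$ is never used: any $b\in G$ with $a\not\leq_X b$ suffices. For two distinct minimal points no swap is needed anyway, since neither lies below the other. Second, choosing $a\in V\subseteq K\subseteq U$ requires local compactness in the neighbourhood-base sense; you then get a saturated $K$ by saturating a compact neighbourhood inside the upper set $U$. This is the convention implicit in the paper's definition of stable compactness, but you should state it explicitly. A weaker reading, where each point merely has some compact neighbourhood, would not support this step.
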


For a $T_0$-space $X$ and $U\in\OO(X)$, let
\[
  \Box_X U=\{K\in\Q(X):K\subseteq U\}.
\]
These sets form a base for the upper Vietoris topology on $\Q(X)$. The resulting space is the \emph{Smyth power space} $\PS(X)$. Its specialization order is reverse inclusion:
\begin{equation}\label{eq:smyth-order}
   K\leq_{\PS(X)}L ~\Longleftrightarrow~ L\subseteq K.
\end{equation}

A nonempty subset $A$ of a $T_0$-space $X$ is \emph{irreducible} if, for closed subsets $F_1,F_2$,
\[
   A\subseteq F_1\cup F_2
   ~\Longrightarrow~
   A\subseteq F_1\text{ or }A\subseteq F_2.
\]
The space $X$ is \emph{sober} if every irreducible closed subset $F$ of $X$ is of the form $F=\cl_X\{x\}$ for a unique $x\in X$.

\begin{theorem}[{\cite[Theorem~3.13]{HeckmannKeimel}; see also \cite[Lemma~7.20]{Schalk}}]\label{thm:sob-smyth}
For a $T_0$-space $X$, the following conditions are equivalent:
\begin{enumerate}[\rm (1)]
\item $X$ is sober.
\item $\PS(X)$ is sober.
\end{enumerate}
\end{theorem}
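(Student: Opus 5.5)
The plan is to prove both implications through the correspondence between closed sets of $\PS(X)$ and open sets of $X$. For $U\in\OO(X)$, the complement of $\Box_X U$ in $\Q(X)$ is the set of $K\in\Q(X)$ with $K\not\subseteq U$. Since the $\Box_X U$ form a base and $\Box_X U\cap\Box_X V=\Box_X(U\cap V)$, every closed set of $\PS(X)$ is an intersection of such complements.

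\emph{(1)$\Rightarrow$(2).} Let $\mathcal F\subseteq\Q(X)$ be an irreducible closed set. Consider the family of open sets
\[
  \mathcal U=\{U\in\OO(X): \mathcal F\cap\Box_X U\neq\emptyset\}.
\]
\begin{enumerate}[\rm (i)]
\item Show that $\mathcal U$ is a filter of opens. It is clearly an upper set. For $U,V\in\mathcal U$, the sets $\Box_X U$ and $\Box_X V$ both meet the irreducible $\mathcal F$. Hence their intersection $\Box_X(U\cap V)$ also meets $\mathcal F$, so $U\cap V\in\mathcal U$.
\item Show that $\mathcal U$ is Scott-open. Suppose $\bigcup_i U_i\in\mathcal U$ with the family directed. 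Then some $K\in\mathcal F$ satisfies $K\subseteq\bigcup_i U_i$. By compactness and directedness, $K\subseteq U_i$ for some $i$.
\item Apply the Hofmann--Mislove theorem, which holds because $X$ is sober. Then $Q=\bigcap\mathcal U$ is compact saturated, and $U\in\mathcal U$ if and only if $Q\subseteq U$. Moreover $Q\neq\emptyset$, since $\emptyset\notin\mathcal U$.
\item Check $\mathcal F=\cl\{Q\}$. In the order \eqref{eq:smyth-order}, this means $\mathcal F=\{K: Q\subseteq K\}$. If $K\in\mathcal F$, then every open $U\supseteq K$ lies in $\mathcal U$, so $Q\subseteq U$; since $K$ is saturated, $Q\subseteq K$. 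Conversely, suppose $Q\subseteq K$ but $K\notin\mathcal F$. Then some basic open $\Box_X U$ contains $K$ and misses $\mathcal F$. But $Q\subseteq K\subseteq U$ gives $U\in\mathcal U$, a contradiction.
\end{enumerate}
Uniqueness of $Q$ follows from $T_0$.

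\emph{(2)$\Rightarrow$(1).} Let $F\subseteq X$ be an irreducible closed set. Put $\mathcal F=\{K\in\Q(X):K\cap F\neq\emptyset\}$.
\begin{enumerate}[\rm (i)]
\item $\mathcal F$ is closed, because its complement is $\Box_X(X\setminus F)$.
\item $\mathcal F$ is irreducible. Suppose $\mathcal F$ meets both $\Box_X U$ and $\Box_X V$. Since $K\subseteq U$ and $K\cap F\neq\emptyset$, the set $U$ meets $F$, and likewise $V$ meets $F$. By irreducibility of $F$, some $x\in F\cap U\cap V$. Then $\up{X}x\in\mathcal F\cap\Box_X(U\cap V)$.
\item By sobriety of $\PS(X)$, $\mathcal F=\cl\{Q\}=\{K:Q\subseteq K\}$ for some $Q\in\Q(X)$.
\item For every $x\in F$ we have $\up{X}x\in\mathcal F$, hence $Q\subseteq\up{X}x$. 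Pick $q\in Q$; then $q\geq x$ for every $x\in F$. Also $Q$ meets $F$, because $Q\in\mathcal F$; so choose $y\in Q\cap F$.
\item Since $F$ is closed, hence a lower set, and $y\leq$ anything in $F$'s upper bound region, we get $F\subseteq\downarrow y=\cl\{y\}\subseteq F$.
\end{enumerate}
Uniqueness again follows from $T_0$.

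The main obstacle is the final step of (1)$\Rightarrow$(2). It requires that $\mathcal U$ be a Scott-open filter, so that Hofmann--Mislove applies and $\mathcal F$ is exactly recovered as $\{K:Q\subseteq K\}$.
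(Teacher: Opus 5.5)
The paper gives no proof of this statement. It quotes it as background from Heckmann--Keimel and Schalk, so there is no internal argument to compare yours with. Your proposal is a correct, self-contained proof along standard lines.

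For (1)$\Rightarrow$(2), the family $\{U\in\OO(X):\mathcal F\cap\Box_XU\neq\emptyset\}$ is indeed a Scott-open filter; it contains $X$ because $\mathcal F\neq\emptyset$. Hofmann--Mislove is exactly where sobriety of $X$ enters, and your two inclusions correctly give $\mathcal F=\{K:Q\subseteq K\}=\cl\{Q\}$.

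For (2)$\Rightarrow$(1), the hitting family $\{K\in\Q(X):K\cap F\neq\emptyset\}$ is closed and irreducible, and its generic point yields a greatest element of $F$. This direction uses nothing beyond the $T_0$ structure. Both directions are used later in the paper: the forward one in Corollary~\ref{cor:sober-cosober}, the converse in Example~\ref{ex:cocountable}. Your argument supplies both.

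Two small points need tidying, both in (2)$\Rightarrow$(1).

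\emph{Step (v).} The justification ``$y\leq$ anything in $F$'s upper bound region'' is not the right one. Read literally, it holds automatically for any $y\in F$ and does not give $F\subseteq\downarrow y$. What you need is $x\leq y$ for every $x\in F$. This holds because $y\in Q\subseteq\up{X}x$ for each $x\in F$, which is step (iv) applied with $q=y$; the separate choice of $q$ is therefore superfluous. Hence $y$ is the greatest element of $F$, and $F\subseteq\downarrow y=\cl\{y\}\subseteq F$.

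\emph{Step (ii).} You should record that $\mathcal F\neq\emptyset$, since it contains $\up{X}x$ for any $x\in F$. You should also note that checking irreducibility on basic open sets suffices. Every open subset of $\PS(X)$ meeting $\mathcal F$ contains some $\Box_XU$ meeting $\mathcal F$, and $\Box_XU\cap\Box_XV=\Box_X(U\cap V)$.
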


We use repeatedly the following elementary facts.

\begin{lemma}\label{lem:saturated-subspace}
Let $Y$ be a saturated subspace of a $T_0$-space $X$. Then the identity map induces a homeomorphism
\[
   \PS(Y)\cong\{K\in\Q(X):K\subseteq Y\},
\]
where the set on the right has the subspace topology inherited from $\PS(X)$.
\end{lemma}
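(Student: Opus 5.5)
The plan is to check directly that the identity map is a bijection onto the stated set and that the two topologies agree on it. Both steps rest on one basic fact: since $Y$ is saturated in $X$, the specialization order of $Y$ is the restriction of $\leq_X$, and for $A\subseteq Y$ we have $\up{Y}A=\up{X}A$. Indeed, $\up{X}A\subseteq Y$ because $Y$ is an upper set. Hence a subset of $Y$ is saturated in $Y$ if and only if it is saturated in $X$.

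\textbf{Bijection.} Compactness of a subset $K\subseteq Y$ is intrinsic: it does not matter whether we compute it in the subspace $Y$ or in $X$. So $K\in\Q(Y)$ exactly when $K$ is a nonempty compact saturated subset of $X$ contained in $Y$. This shows that $\Q(Y)=\{K\in\Q(X):K\subseteq Y\}$ as sets, and the identity map is the required bijection.

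\textbf{Topologies.} Write $\mathcal S=\{K\in\Q(X):K\subseteq Y\}$. The subspace topology on $\mathcal S$ has subbase (in fact base) the sets $\Box_XU\cap\mathcal S=\{K\in\mathcal S:K\subseteq U\cap Y\}=\Box_Y(U\cap Y)$, where $U\in\OO(X)$. Every open set of $Y$ has the form $U\cap Y$, so these are exactly the basic sets $\Box_YV$ with $V\in\OO(Y)$. The two bases therefore coincide, and the identity map is a homeomorphism.

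The only point needing care is the saturation step. It is precisely where the hypothesis that $Y$ is saturated is used: without it, a set that is saturated in $Y$ need not be saturated in $X$. Everything else is routine.
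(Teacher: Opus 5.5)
Your proposal is correct and follows essentially the same route as the paper. You identify $\Q(Y)$ with $\{K\in\Q(X):K\subseteq Y\}$ using intrinsic compactness and the agreement of saturation in $Y$ and in $X$; you phrase the latter as $\up{Y}A=\up{X}A$, where the paper checks it pointwise. You then match the bases via $\Box_XU\cap\mathcal S=\Box_Y(U\cap Y)$, exactly as the paper does.
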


\begin{proof}
If $K\in\Q(Y)$, then $K$ is compact in $X$. If $x\in K$ and $x\leq_X z$, saturation of $Y$ gives $z\in Y$, and saturation of $K$ in $Y$ gives $z\in K$. Thus $K\in\Q(X)$ and $K\subseteq Y$. Conversely, if $K\in\Q(X)$ and $K\subseteq Y$, then compactness passes to the subspace $Y$, and saturation in $X$ implies saturation in $Y$. Hence $K\in\Q(Y)$.

The two spaces therefore have the same underlying set. If $V=U\cap Y$ is open in $Y$, where $U\in\OO(X)$, then
\[
   \Box_YV=\Box_XU\cap\PS(Y).
\]
Hence the upper Vietoris topologies agree.
\end{proof}

\begin{lemma}\label{lem:kirr-subspace}
Let $Y$ be a saturated subspace of $X$, and let $G\in\KIrr(X)$ satisfy $G\subseteq Y$. Then $G\in\KIrr(Y)$.
\end{lemma}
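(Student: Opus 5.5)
The argument will rest on the description of $\Q(Y)$ obtained in the proof of Lemma~\ref{lem:saturated-subspace}: for a saturated subspace $Y$ of $X$, the members of $\Q(Y)$ are exactly the members of $\Q(X)$ contained in $Y$. The plan is to check, one after the other, that $G$ belongs to $\Q(Y)$ and that $G$ is $k$-irreducible there.

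Since $G\in\KIrr(X)\subseteq\Q(X)$ and $G\subseteq Y$, that lemma gives $G\in\Q(Y)$ at once. For $k$-irreducibility in $Y$, I will take a decomposition $G=G_1\cup G_2$ with $G_1,G_2\in\Q(Y)$. Applying the same lemma to $G_1$ and $G_2$ shows that both are nonempty compact saturated subsets of $X$, so $G_1,G_2\in\Q(X)$. Then $G=G_1\cup G_2$ is a decomposition in $\Q(X)$, and $k$-irreducibility of $G$ in $X$ forces $G=G_1$ or $G=G_2$. Hence $G\in\KIrr(Y)$.

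The only point needing care, and so the main obstacle, is transferring saturation of $G_i$ from $Y$ up to $X$. Suppose $x\in G_i$ and $x\leq_X z$. Because $Y$ is saturated in $X$, we get $z\in Y$. The specialization order of the subspace $Y$ is the restriction of $\leq_X$, so $x\leq_Y z$. Saturation of $G_i$ in $Y$ then gives $z\in G_i$. This is exactly the argument already carried out in the proof of Lemma~\ref{lem:saturated-subspace}, so I will cite it rather than repeat it. Compactness transfers from $Y$ to $X$ because the topology of $Y$ is the subspace topology.
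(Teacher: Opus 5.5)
Your proof is correct and follows the paper's argument step for step: first Lemma~\ref{lem:saturated-subspace} gives $G\in\Q(Y)$, then it lifts any decomposition $G=G_1\cup G_2$ in $\Q(Y)$ to $\Q(X)$, and $k$-irreducibility in $X$ finishes the proof. Your paragraph on transferring saturation from $Y$ to $X$ just re-derives what that lemma already provides, so citing it is enough.
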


\begin{proof}
By Lemma~\ref{lem:saturated-subspace}, $G\in\Q(Y)$. If $G=G_1\cup G_2$ with $G_i\in\Q(Y)$, then each $G_i$ is compact saturated in $X$, again by Lemma~\ref{lem:saturated-subspace}. The $k$-irreducibility of $G$ in $X$ yields $G=G_1$ or $G=G_2$.
\end{proof}

The following inheritance property is due to Wen and Xu \cite{WenXu}; the proof is included for later use.

\begin{lemma}\label{lem:cosober-saturated}
Every saturated subspace of a co-sober $T_0$-space is co-sober.
\end{lemma}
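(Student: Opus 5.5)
The argument is a direct combination of Lemma~\ref{lem:kirr-subspace} with co-sobriety of the ambient space $X$. Let $Y$ be a saturated subspace of a co-sober $T_0$-space $X$, and fix $G\in\KIrr(Y)$. First, by Lemma~\ref{lem:saturated-subspace}, $G$ is a nonempty compact saturated subset of $X$ contained in $Y$, so $G\in\Q(X)$. Next I will show that $G$ is $k$-irreducible in $X$; this is the converse direction to Lemma~\ref{lem:kirr-subspace}. Suppose $G=G_1\cup G_2$ with $G_1,G_2\in\Q(X)$. Each $G_i\subseteq G\subseteq Y$, so Lemma~\ref{lem:saturated-subspace} gives $G_i\in\Q(Y)$. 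Then $k$-irreducibility of $G$ in $Y$ yields $G=G_1$ or $G=G_2$, so $G\in\KIrr(X)$.

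Co-sobriety of $X$ now provides a unique $x\in X$ with $G=\up{X}x$. Since $x\in G\subseteq Y$, the point lies in $Y$. The specialization order of the subspace $Y$ is the restriction of $\leq_X$, because closures in $Y$ are traces of closures in $X$. Since $Y$ is saturated, $\up{X}x\subseteq Y$, and therefore $\up{Y}x=\up{X}x\cap Y=\up{X}x=G$. This gives existence.

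For uniqueness, suppose $\up{Y}y=G$ for some $y\in Y$. Again using saturation of $Y$, we have $\up{X}y=\up{Y}y=G=\up{X}x$. Then $y\leq_X x$ and $x\leq_X y$, and the $T_0$ axiom forces $y=x$. Subspaces of $T_0$-spaces are $T_0$, so $Y$ is a co-sober $T_0$-space.

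The only point needing care is checking that upper sets computed in $Y$ and in $X$ agree. This uses saturation of $Y$ twice: once to transfer compact saturated sets between $Y$ and $X$ via Lemma~\ref{lem:saturated-subspace}, and once to identify $\up{Y}x$ with $\up{X}x$. Neither step is a real obstacle.
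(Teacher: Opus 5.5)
Your proof is correct and follows the paper's route. You transfer $G$ to $\Q(X)$ via Lemma~\ref{lem:saturated-subspace}, check $k$-irreducibility in $X$ by pulling any decomposition back into $Y$, apply co-sobriety of $X$, and use saturation of $Y$ to identify $\up{Y}x$ with $\up{X}x$. Two small remarks: your explicit uniqueness check via $T_0$ is something the paper leaves implicit, and you are proving the converse of Lemma~\ref{lem:kirr-subspace} rather than using that lemma, as your opening sentence suggests.
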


\begin{proof}
Let $Y$ be saturated in a co-sober space $X$, and let $G\in\KIrr(Y)$. By Lemma~\ref{lem:saturated-subspace}, $G\in\Q(X)$. If $G=G_1\cup G_2$ with $G_i\in\Q(X)$, then $G_i\subseteq G\subseteq Y$, and another application of Lemma~\ref{lem:saturated-subspace} gives $G_i\in\Q(Y)$. Hence $G$ is $k$-irreducible in $X$. There is therefore an $x\in X$ such that $G=\up{X}x$. Since $x\in G\subseteq Y$ and $Y$ is saturated, $\up{X}x\subseteq Y$, and hence $G=\up{Y}x$.
\end{proof}

For $K\in\Q(X)$, Lemma~\ref{lem:saturated-subspace} and \eqref{eq:smyth-order} give
\begin{equation}\label{eq:PSK-principal}
   \PS(K)\cong\{L\in\Q(X):L\subseteq K\}=\up{\PS(X)}K.
\end{equation}

The union operation on Smyth power spaces is standard; see, for example, \cite[Proposition~7.21]{Schalk}. We include the short proof because the same argument underlies the localization theorem.

\begin{lemma}\label{lem:union-map}
For every $T_0$-space $X$, the map
\[
   \mu_X:\PS(\PS(X))\longrightarrow\PS(X),
   ~
   \mu_X(\mathcal A)=\bigcup_{K\in\mathcal A}K,
\]
is well defined and continuous.
\end{lemma}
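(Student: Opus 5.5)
The plan is to verify three things in turn. First, $\mu_X(\mathcal A)$ is saturated. Second, it is compact (and nonempty). Third, $\mu_X$ is continuous, which we check on the basic open sets $\Box_X U$.

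\emph{Saturation and nonemptiness.} Each $K\in\mathcal A$ is a nonempty saturated subset of $X$. A union of upper sets is an upper set, so $\bigcup\mathcal A$ is saturated. Since $\mathcal A$ is nonempty, the union is nonempty.

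\emph{Compactness.} This is the main step. Let $\{U_i\}_{i\in I}$ be an open cover of $\bigcup\mathcal A$ in $X$.
\begin{enumerate}
\item For each $K\in\mathcal A$, compactness of $K$ gives a finite $F_K\subseteq I$ with $K\subseteq U_{F_K}:=\bigcup_{i\in F_K}U_i$. Hence $K\in\Box_X U_{F_K}$.
\item The sets $\Box_X U_F$, for $F$ ranging over finite subsets of $I$, are open in $\PS(X)$ and cover $\mathcal A$.
\item Since $\mathcal A$ is compact in $\PS(X)$, finitely many of them, say $\Box_X U_{F_1},\dots,\Box_X U_{F_m}$, already cover $\mathcal A$.
\item Then every $K\in\mathcal A$ lies in some $U_{F_j}$. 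Therefore $\bigcup\mathcal A\subseteq\bigcup_{j=1}^m U_{F_j}$, which is a finite subcover.
\end{enumerate}
The key device is grouping the cover into finite unions, so that each member of $\mathcal A$ sits inside a single basic open set. After that, compactness of $\mathcal A$ in the upper Vietoris topology applies directly. Saturation of $\mathcal A$ itself is not needed here.

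\emph{Continuity.} It suffices to check preimages of the basic open sets $\Box_X U$ with $U\in\OO(X)$. We claim
\[
   \mu_X^{-1}(\Box_X U)=\{\mathcal A\in\Q(\PS(X)):\mathcal A\subseteq\Box_X U\}=\Box_{\PS(X)}(\Box_X U).
\]
Indeed, $\bigcup\mathcal A\subseteq U$ holds exactly when every $K\in\mathcal A$ satisfies $K\subseteq U$. The set $\Box_X U$ is open in $\PS(X)$, so the right-hand side is a basic open set of $\PS(\PS(X))$. Hence $\mu_X$ is continuous.

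I expect no real obstacle beyond the finite-grouping step in the compactness argument. Everything else is a direct unwinding of the definitions of saturation and of the upper Vietoris topology.
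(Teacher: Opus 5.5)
Your proposal is correct and follows essentially the same route as the paper: you group the cover into finite unions so that each member of $\mathcal A$ lies in a single basic open set $\Box_X U_F$, then apply compactness of $\mathcal A$ in the upper Vietoris topology, and you prove continuity via $\mu_X^{-1}(\Box_X U)=\Box_{\PS(X)}(\Box_X U)$. Your explicit check that $\bigcup\mathcal A$ is nonempty is a small point the paper leaves implicit.
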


\begin{proof}
Let $\mathcal A\in\Q(\PS(X))$ and put $K=\bigcup\mathcal A$. The set $K$ is saturated. Let $\{U_i:i\in I\}$ be an open cover of $K$. For each $L\in\mathcal A$, choose a finite $F_L\subseteq I$ with
\[
   L\subseteq V_L:=\bigcup_{i\in F_L}U_i.
\]
The family $\{\Box_XV_L:L\in\mathcal A\}$ covers $\mathcal A$. Compactness of $\mathcal A$ yields $L_1,\ldots,L_m\in\mathcal A$ such that
\[
   \mathcal A\subseteq\Box_XV_{L_1}\cup\cdots\cup\Box_XV_{L_m}.
\]
It follows that $K\subseteq V_{L_1}\cup\cdots\cup V_{L_m}$, so $K$ is compact.

For $U\in\OO(X)$,
\[
   \mu_X^{-1}(\Box_XU)
   =\left\{\mathcal A:\bigcup\mathcal A\subseteq U\right\}
   =\Box_{\PS(X)}(\Box_XU),
\]
which proves continuity.
\end{proof}

We use the standard convention that a stably compact space is a compact, locally compact, coherent and sober space; see, for example, \cite[Definition~2.2 and Remark~2.4]{Lawson}.

\begin{lemma}[{\cite[Theorem~3.1]{Goubault2010}; see also \cite[Lemma~5.1 and Remark~5.3]{Lawson}}]\label{lem:stable-preserved}
The Smyth power space of a stably compact space is stably compact.
\end{lemma}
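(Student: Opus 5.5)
This lemma is quoted from the literature (Goubault-Larrecq, Theorem~3.1; Lawson, Lemma~5.1), so the paper may simply cite it. If I were to prove it, I would verify the four defining properties of a stably compact space for $\PS(X)$ one at a time, assuming $X$ is compact, locally compact, coherent and sober.

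\emph{Sobriety.} This follows at once from Theorem~\ref{thm:sob-smyth}.

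\emph{Compactness.} Since $X$ is compact, $X\in\Q(X)$. By \eqref{eq:smyth-order}, $X$ is the least element of $\PS(X)$. Any open set containing $X$ is an upper set, so it contains every element, and $\PS(X)=\up{\PS(X)}X$ is compact.

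\emph{Local compactness.} Let $K\in\Box_XU$. Local compactness of $X$, together with compactness of $K$, gives an open $V$ and a compact saturated $Q$ with $K\subseteq V\subseteq Q\subseteq U$. Then $K\in\Box_XV\subseteq\up{\PS(X)}Q\subseteq\Box_XU$. By \eqref{eq:PSK-principal}, $\up{\PS(X)}Q\cong\PS(Q)$ is compact, because it is a principal upper set. So every point has a neighbourhood base of compact saturated sets.

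\emph{Coherence.} This is the step I expect to be the main obstacle. Let $\mathcal A,\mathcal B\in\Q(\PS(X))$; I must show $\mathcal A\cap\mathcal B$ is compact. I would use the standard characterization that, for a locally compact sober space, the compact saturated sets are exactly the sets closed in the de Groot dual (cocompact) topology. Coherence is then equivalent to the patch topology being compact, since intersections of dual-closed sets are dual-closed and the space is compact. So it suffices to show that the patch topology of $\PS(X)$ is compact Hausdorff.

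My plan is to identify the patch topology of $\PS(X)$ with the Vietoris topology on the patch-closed upper subsets of the compact pospace $X^{\mathrm{patch}}$. The subbasic open sets would be $\Box U$, for $U$ open, and the complements of $\up{\PS(X)}Q=\{K:K\subseteq Q\}$, which are the sets $\Diamond(X\setminus Q)$ for $Q\in\Q(X)$. This is the Vietoris hyperspace of a compact Hausdorff space restricted to a closed subspace, so it is compact Hausdorff.

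The delicate points are:
\begin{itemize}
\item checking that the cocompact topology on $\PS(X)$ is generated by the sets $\{K:K\cap(X\setminus Q)\neq\emptyset\}$, which uses that compact saturated sets of $\PS(X)$ are intersections of finite unions of the sets $\up{\PS(X)}Q$, via local compactness;
\item checking that the set of upper sets is closed in the Vietoris hyperspace of a compact pospace.
\end{itemize}

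Failing a clean argument, I would simply cite Goubault-Larrecq or Lawson, as the statement does.
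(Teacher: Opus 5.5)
The paper gives no proof of this lemma and simply cites Goubault-Larrecq and Lawson, which is also your fallback. Your sobriety, compactness and local-compactness steps are correct. Your first delicate point is also correct. Given $L\notin\mathcal A$, pick $x_K\in L\setminus K$ for each $K\in\mathcal A$, interpolate $K\subseteq V_K\subseteq Q_K$ with $x_K\notin Q_K$, and extract a finite subcover of the $\Box_XV_K$, exactly as in Lemma~\ref{lem:family-vietoris-closed}.

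The gap is the step ``Vietoris hyperspace restricted to a closed subspace.'' The nonempty patch-closed upper sets are in general \emph{not} closed in $\mathcal K(X^{\mathrm{patch}})$. Moreover, the subspace Vietoris topology on them is in general not the patch topology of $\PS(X)$. The reason is that for an upper set $K$ one has $K\cap W\neq\varnothing$ iff $K\cap{\downarrow}W\neq\varnothing$, and ${\downarrow}W$ need not be patch-open.

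Here is a counterexample. Let $X^{\mathrm{patch}}$ be the one-point compactification $\{a_1,a_2,\ldots\}\cup\{a_\infty\}$ of a countable discrete set, together with an isolated point $b$. Order it only by $a_\infty\leq b$, and let $X$ carry the open upper sets, so $X$ is stably compact. Each $\{a_m\}$ lies in $\Q(X)$, but $\{a_m\}\to\{a_\infty\}$ in the Vietoris topology, and $\{a_\infty\}$ is not an upper set. Hence $\Q(X)$ with the subspace Vietoris topology is not even compact. In the patch topology of $\PS(X)$, the same sequence converges instead to $\{a_\infty,b\}$.

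What does work is a quotient instead of a subspace. Consider the map $\mathcal K(X^{\mathrm{patch}})\to\Q(X)$, $F\mapsto\uparrow F$.
\begin{itemize}
\item It is surjective: each $K\in\Q(X)$ is patch-closed and equals $\uparrow K$.
\item It is well defined: $\uparrow F$ is compact saturated because $F$ is patch-compact, hence compact in $X$.
\item It is continuous into the patch topology: since $U$ and $Q$ are upper sets, the preimages of $\Box_XU$ and of $\{K:K\nsubseteq Q\}$ are the Vietoris-open sets $\Box U$ and $\Diamond(X\setminus Q)$.
\end{itemize}
By your first delicate point, these sets generate the patch topology of $\PS(X)$. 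That topology is therefore compact, as a continuous image of the compact space $\mathcal K(X^{\mathrm{patch}})$. Coherence then follows because $\mathcal A\cap\mathcal B$ is patch-closed.

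This repaired argument still rests on patch-compactness of $X$ itself, which is the genuinely nontrivial part of stable compactness and which you take for granted.

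Finally, your ``standard characterization'' is circular here. It says that in a locally compact sober space the compact saturated sets are exactly the dual-closed sets, but this already presupposes coherence, since the intersection of two compact saturated sets is always dual-closed. You only need the trivial inclusion that compact saturated sets are dual-closed.
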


A space $X$ is \emph{$k_1$-Hausdorff} (or $k_1$--$T_2$) if every compact subspace is Hausdorff, and $X$ is a \emph{KC-space} if each compact subset of $X$ is closed. The following implications are known; see, for example, \cite{ClontzWilliams}.

\begin{lemma}\label{lem:separation}
\[
   T_2\Longrightarrow k_1\text{--}T_2\Longrightarrow KC\Longrightarrow T_1.
\]
\end{lemma}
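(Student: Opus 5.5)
The chain $T_2\Rightarrow k_1\text{--}T_2\Rightarrow KC\Rightarrow T_1$ reduces to three separate implications. I would prove each from the standard facts about compact sets, in the order stated.

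\emph{First implication.} Let $X$ be Hausdorff and $C\subseteq X$ compact. The subspace topology on $C$ is Hausdorff, since being $T_2$ is hereditary: given distinct $x,y\in C$, take disjoint open $U\ni x$ and $V\ni y$ in $X$; then $U\cap C$ and $V\cap C$ are disjoint open neighbourhoods of $x$ and $y$ in $C$. Hence every compact subspace is Hausdorff, so $X$ is $k_1$--$T_2$.

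\emph{Second implication.} This is the step that needs an actual argument. Let $X$ be $k_1$--$T_2$ and let $C\subseteq X$ be compact. I would show that $X\setminus C$ is open by showing $C$ is closed under convergence of nets, testing only with compact sets.
\begin{enumerate}[\rm (a)]
\item Suppose $x\in\cl_X C$, and choose a net $(c_i)$ in $C$ converging to $x$.
\item By compactness of $C$, pass to a subnet converging to some $c\in C$.
\item Work in $D=C\cup\{x\}$. This set is compact, since a cover of $D$ has a finite subcover of $C$, and adding one open set containing $x$ covers $D$.
\item By the hypothesis, $D$ is Hausdorff. The subnet converges in $D$ both to $x$ and to $c$, so uniqueness of limits gives $x=c\in C$.
\end{enumerate}
Thus $C$ is closed, and $X$ is KC.

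\emph{Third implication.} Singletons are compact. In a KC-space every singleton $\{x\}$ is therefore closed, which is exactly the $T_1$ property.

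The main obstacle is the second implication: closedness of $C$ cannot be tested inside $C$ alone, since the point $x$ lies outside $C$. The trick of adjoining the limit point $x$ to form the compact set $C\cup\{x\}$ is what resolves this. Using nets rather than sequences avoids any countability assumption. As a cross-check, the statement is cited from \cite{ClontzWilliams}, so it may simply be quoted.
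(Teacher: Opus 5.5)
Your proof is correct. The paper gives no argument for this lemma and simply quotes it from \cite{ClontzWilliams}. Your three steps therefore supply a self-contained proof of something the paper outsources.

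The first and third implications are the standard one-line observations. For the middle one you use the usual device of adjoining the limit point, and that is the right idea. You can run it without nets. If $x\in\cl_X C\setminus C$, then $C$ is a compact subset of $D=C\cup\{x\}$, which is compact Hausdorff by the $k_1$--$T_2$ hypothesis. Hence $C$ is closed in $D$, so $x\notin\cl_D C=\cl_X C\cap D$, a contradiction. This is the same idea with less machinery.

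Note also that the paper's arguments only use the first implication, to pass from Hausdorff to $k_1$--$T_2$ in Corollary~\ref{cor:Hausdorff-metric}. The rest of the chain serves as context for the remarks on KC-spaces in the conclusions.
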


\section{Finite Smyth iterates: carriers and compact localization}

For a $T_0$-space $X$, define
\[
   \PS^0(X)=X,
   ~
   \PS^{n+1}(X)=\PS(\PS^n(X))~(n\geq0).
\]
Thus $\PS^1(X)=\PS(X)$. If $Y$ is saturated in $X$, repeated use of Lemma~\ref{lem:saturated-subspace} identifies $\PS^n(Y)$ with a subspace of $\PS^n(X)$.

\begin{lemma}\label{lem:iterate-principal}
Let $K\in\Q(X)$ and $n\geq1$. Then $\PS^{n-1}(K)$ is a compact saturated subset of $\PS^{n-1}(X)$ and
\begin{equation}\label{eq:iterate-principal}
   \PS^n(K)=\up{\PS^n(X)}\PS^{n-1}(K).
\end{equation}
In particular, $\PS^n(K)$ is a saturated subspace of $\PS^n(X)$.
\end{lemma}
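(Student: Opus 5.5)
The plan is to prove the statement by induction on $n\geq1$, carrying two claims together: $\PS^{n-1}(K)$ is a compact saturated subset of $\PS^{n-1}(X)$, and \eqref{eq:iterate-principal} holds. Throughout, $\PS^{n-1}(K)$ is regarded as a subspace of $\PS^{n-1}(X)$ via the repeated identification from Lemma~\ref{lem:saturated-subspace}.

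For the base case $n=1$, we have $\PS^0(K)=K$, which lies in $\Q(X)$ by hypothesis. The equality $\PS(K)=\up{\PS(X)}K$ is exactly \eqref{eq:PSK-principal}, which was obtained from Lemma~\ref{lem:saturated-subspace} and the order \eqref{eq:smyth-order}.

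For the inductive step, assume that $\PS^{n-1}(K)\in\Q(\PS^{n-1}(X))$ and that $\PS^n(K)=\up{\PS^n(X)}\PS^{n-1}(K)$. We argue in three steps.
\begin{enumerate}[\rm (1)]
\item The second hypothesis says that $\PS^n(K)$ is a principal upper set in $\PS^n(X)$. Principal upper sets are saturated, since saturated means upper in the specialization order.
\item We show that $\PS^n(K)$ is compact in $\PS^n(X)$. By Lemma~\ref{lem:saturated-subspace}, $\PS^n(K)=\PS(\PS^{n-1}(K))$ as a subspace of $\PS^n(X)$. The space $\PS(L)$ is compact for any compact saturated $L$: given a cover by basic opens $\Box U_i$, the point $L$ itself lies in some $\Box U_i$. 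Then every $M\subseteq L$ also lies in $\Box U_i$, so a single member already covers. (Equivalently, a principal upper set $\up{}p$ is always compact.) Hence $\PS^n(K)\in\Q(\PS^n(X))$, which is the first claim at level $n+1$.
\item Apply \eqref{eq:PSK-principal} in the space $\PS^n(X)$ with the compact saturated set $\PS^n(K)$. This gives
\[
\PS(\PS^n(K))=\{\mathcal L\in\Q(\PS^n(X)):\mathcal L\subseteq\PS^n(K)\}=\up{\PS^{n+1}(X)}\PS^n(K).
\]
The left-hand side is $\PS^{n+1}(K)$ under the identification, so \eqref{eq:iterate-principal} holds at level $n+1$.
\end{enumerate}

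The final assertion of the lemma, that $\PS^n(K)$ is a saturated subspace of $\PS^n(X)$, follows from \eqref{eq:iterate-principal} exactly as in step (1).

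The main obstacle is bookkeeping the identifications. The subspace $\PS^n(K)\subseteq\PS^n(X)$ is defined by iterating Lemma~\ref{lem:saturated-subspace}, and that lemma needs the inner set to be saturated at each stage. This is why the induction must carry saturatedness (indeed principal-ness) of $\PS^{n-1}(K)$ along with the other claims. I would make explicit that the set identified by $\PS^n(K)\cong\{\mathcal L\in\Q(\PS^{n-1}(X)):\mathcal L\subseteq\PS^{n-1}(K)\}$ carries the subspace topology, so that compactness computed in $\PS^n(K)$ agrees with compactness computed in $\PS^n(X)$.
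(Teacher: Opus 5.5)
Your proposal is correct and follows essentially the same route as the paper. Both argue by induction on $n$: compactness of $\PS^n(K)$ comes from its being the principal upper set of the point $\PS^{n-1}(K)$ in $\PS^n(X)$, and the equality at level $n+1$ comes from applying Lemma~\ref{lem:saturated-subspace} (equivalently \eqref{eq:PSK-principal}) inside $\PS^n(X)$ together with reverse inclusion. Your explicit bookkeeping of saturatedness, needed so that each identification is legitimate, spells out what the paper leaves implicit.
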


\begin{proof}
For $n=1$, the assertion is \eqref{eq:PSK-principal}, and $\PS^0(K)=K\in\Q(X)$. Suppose the assertion holds at level $n$. Recall that the principal saturation $\uparrow_Z z$ of any point $z$ in a topological space $Z$ is compact: if an open family covers $\uparrow_Z z$, one member contains $z$, and, being an upper set, already contains all of $\uparrow_Z z$. Hence $\PS^n(K)$, being the principal saturation of the point $\PS^{n-1}(K)$ in $\PS^n(X)$, is compact saturated. Applying Lemma~\ref{lem:saturated-subspace} inside $\PS^n(X)$ gives
\[
\begin{split}
   \PS^{n+1}(K)
   &=\{\mathcal A\in\Q(\PS^n(X)):\mathcal A\subseteq\PS^n(K)\}\\
   &=\up{\PS^{n+1}(X)}\PS^n(K),
\end{split}
\]
where the last equality uses reverse inclusion at the next Smyth level.
\end{proof}

Put $c_1=\operatorname{id}_{\PS(X)}$. For $n\geq1$, define recursively
\begin{equation}\label{eq:carrier-map}
   c_{n+1}
   =c_n\circ\mu_{\PS^{n-1}(X)}:
   \PS^{n+1}(X)\longrightarrow\PS(X).
\end{equation}
Thus $c_n$ repeatedly takes unions until a compact saturated subset of $X$ remains. In particular,
\begin{equation}\label{eq:carrier-union}
   c_{n+1}(\mathcal A)
   =\bigcup_{A\in\mathcal A}c_n(A).
\end{equation}

\begin{theorem}[Carrier criterion]\label{thm:carrier}
Let $n\geq1$, $A\in\PS^n(X)$ and $K\in\Q(X)$. Then
\begin{equation}\label{eq:carrier-criterion}
   A\in\PS^n(K)
   ~\Longleftrightarrow~
   c_n(A)\subseteq K.
\end{equation}
Consequently, $c_n(A)$ is the unique least member of $\Q(X)$, under inclusion, that carries $A$, in the sense that $A\in\PS^n(c_n(A))$ and
\[
   A\in\PS^n(K)\Longrightarrow c_n(A)\subseteq K.
\]
\end{theorem}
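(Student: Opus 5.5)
We argue by induction on $n$, using the identification of $\PS^n(K)$ with a subspace of $\PS^n(X)$ from Lemma~\ref{lem:saturated-subspace} and its iterated form in Lemma~\ref{lem:iterate-principal}.

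For $n=1$ the statement is immediate. Here $c_1$ is the identity, and by \eqref{eq:PSK-principal} an $A\in\PS(X)$ lies in $\PS(K)$ exactly when $A\subseteq K$.

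For the inductive step, assume \eqref{eq:carrier-criterion} at level $n$ for every $K\in\Q(X)$, and take $\mathcal A\in\PS^{n+1}(X)$. By Lemma~\ref{lem:iterate-principal}, $\PS^n(K)$ is a saturated subspace of $\PS^n(X)$. Lemma~\ref{lem:saturated-subspace}, applied inside $\PS^n(X)$, then gives
\[
   \mathcal A\in\PS^{n+1}(K)\Longleftrightarrow\mathcal A\subseteq\PS^n(K).
\]
By the induction hypothesis the right-hand side is equivalent to $c_n(A)\subseteq K$ for all $A\in\mathcal A$. By \eqref{eq:carrier-union} this holds exactly when $c_{n+1}(\mathcal A)=\bigcup_{A\in\mathcal A}c_n(A)\subseteq K$. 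That closes the induction.

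The main point to watch is bookkeeping rather than a real obstacle. The identification of $\PS^{n+1}(K)$ with $\{\mathcal A\in\Q(\PS^n(X)):\mathcal A\subseteq\PS^n(K)\}$ has to be the same identification used to embed $\PS^n(K)$ into $\PS^n(X)$ at the previous level. Lemma~\ref{lem:saturated-subspace} needs the subspace to be saturated, and Lemma~\ref{lem:iterate-principal} guarantees that $\PS^n(K)$ is saturated at every level. Membership is therefore unambiguous and the homeomorphisms compose coherently.

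For the consequence, set $K_0=c_n(A)$. It lies in $\Q(X)$ because $c_n$ maps into $\PS(X)$, by Lemma~\ref{lem:union-map} applied repeatedly. The equivalence with $K=K_0$ and the trivial inclusion $K_0\subseteq K_0$ give $A\in\PS^n(K_0)$. The forward implication of the equivalence for arbitrary $K$ gives minimality, namely $A\in\PS^n(K)\Rightarrow K_0\subseteq K$. Uniqueness of a least element under inclusion is automatic by antisymmetry.
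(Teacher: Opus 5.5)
Your proposal is correct and follows the paper's proof essentially step for step. You use the same induction, with the base case from Lemma~\ref{lem:saturated-subspace}, and the inductive step via Lemma~\ref{lem:iterate-principal}, the induction hypothesis and \eqref{eq:carrier-union}; the consequence comes from taking $K=c_n(A)$. Your extra remarks, that $c_n(A)\in\Q(X)$ by repeated use of Lemma~\ref{lem:union-map} and that the subspace identifications compose coherently, are accurate and only make explicit what the paper leaves implicit.
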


\begin{proof}
The case $n=1$ follows from Lemma~\ref{lem:saturated-subspace}, since $c_1(A)=A$. Suppose the assertion holds at level $n$, and let $\mathcal A\in\PS^{n+1}(X)$. By Lemma~\ref{lem:iterate-principal},
\[
   \mathcal A\in\PS^{n+1}(K)
   ~\Longleftrightarrow~
   \mathcal A\subseteq\PS^n(K).
\]
By the induction hypothesis, this is equivalent to $c_n(A)\subseteq K$ for every $A\in\mathcal A$, which by \eqref{eq:carrier-union} is equivalent to $c_{n+1}(\mathcal A)\subseteq K$.

Taking $K=c_n(A)$ shows that $A\in\PS^n(c_n(A))$. If $A\in\PS^n(K)$ for another $K\in\Q(X)$, then \eqref{eq:carrier-criterion} gives $c_n(A)\subseteq K$.
\end{proof}

\begin{theorem}[Finite compact localization]\label{thm:localization}
Let $X$ be a $T_0$-space and $n\geq1$. The following are equivalent:
\begin{enumerate}[\rm (1)]
\item $\PS^n(X)$ is co-sober.
\item $\PS^n(K)$ is co-sober for every $K\in\Q(X)$.
\end{enumerate}
\end{theorem}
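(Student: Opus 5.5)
(1)$\Rightarrow$(2) is the easy direction. By Lemma~\ref{lem:iterate-principal}, for each $K\in\Q(X)$ the space $\PS^n(K)$ is a saturated subspace of $\PS^n(X)$. Lemma~\ref{lem:cosober-saturated} then says co-sobriety passes to it. Nothing more is needed.

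For (2)$\Rightarrow$(1), take $\mathcal G\in\KIrr(\PS^n(X))$. The plan is to localize $\mathcal G$ into a single compact saturated carrier and apply the hypothesis there.

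\textbf{Step 1.} View $\mathcal G$ as an element of $\PS(\PS^n(X))=\PS^{n+1}(X)$ and put $K=c_{n+1}(\mathcal G)=\bigcup_{A\in\mathcal G}c_n(A)$. By Lemma~\ref{lem:union-map} and the recursive definition \eqref{eq:carrier-map}, $K\in\Q(X)$. Theorem~\ref{thm:carrier}, applied at level $n+1$ with this $K$, gives $\mathcal G\in\PS^{n+1}(K)$. By Lemma~\ref{lem:iterate-principal} this is equivalent to $\mathcal G\subseteq\PS^n(K)$. Equivalently, each $A\in\mathcal G$ satisfies $c_n(A)\subseteq K$, and Theorem~\ref{thm:carrier} at level $n$ then gives $A\in\PS^n(K)$.

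\textbf{Step 2.} $\PS^n(K)$ is a saturated subspace of $\PS^n(X)$ by Lemma~\ref{lem:iterate-principal}, and $\mathcal G\subseteq\PS^n(K)$ by Step~1. Lemma~\ref{lem:kirr-subspace} therefore gives $\mathcal G\in\KIrr(\PS^n(K))$.

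\textbf{Step 3.} By hypothesis (2), $\PS^n(K)$ is co-sober. Hence $\mathcal G=\up{\PS^n(K)}A_0$ for some $A_0\in\PS^n(K)$. Since $\PS^n(K)$ is saturated in $\PS^n(X)$, its specialization order is the restriction of that of $\PS^n(X)$, and $\up{\PS^n(X)}A_0\subseteq\PS^n(K)$. It follows that $\up{\PS^n(K)}A_0=\up{\PS^n(X)}A_0$, so $\mathcal G$ is the saturation of $A_0$ in $\PS^n(X)$. The generating point is unique by the $T_0$ property of $\PS^n(X)$: if $\up{}A_0=\up{}A_1$, then $A_0\le A_1\le A_0$, so $A_0=A_1$.

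The only step that needs care is Step~1, namely checking that the carrier of $\mathcal G$ is actually a compact saturated subset of $X$. This is exactly why the iterated union maps are used. Once $K\in\Q(X)$ is secured, the carrier criterion at two consecutive levels does the rest. It is also worth confirming that $T_0$-ness of the iterates, needed for uniqueness in Step~3, is automatic, since the specialization order of a Smyth space is reverse inclusion by \eqref{eq:smyth-order}.
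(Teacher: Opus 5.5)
Your proposal is correct and follows the paper's proof essentially step for step: the easy direction via Lemma~\ref{lem:iterate-principal} and Lemma~\ref{lem:cosober-saturated}, and the converse by taking the carrier $K=c_{n+1}(\mathcal G)$, using the carrier criterion and Lemma~\ref{lem:iterate-principal} to get $\mathcal G\subseteq\PS^n(K)$, Lemma~\ref{lem:kirr-subspace} to get $\mathcal G\in\KIrr(\PS^n(K))$, and then noting that principal saturations in $\PS^n(K)$ and $\PS^n(X)$ coincide. Your explicit check that the generating point is unique, using the $T_0$ property of $\PS^n(X)$, is a small addition that the paper leaves implicit.
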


\begin{proof}
Assume first that $\PS^n(X)$ is co-sober. By Lemma~\ref{lem:iterate-principal}, $\PS^n(K)$ is a saturated subspace of $\PS^n(X)$ for every $K\in\Q(X)$. Hence $\PS^n(K)$ is co-sober by Lemma~\ref{lem:cosober-saturated}.

Conversely, assume (2), and let $G\in\KIrr(\PS^n(X))$. As a compact saturated subset of $\PS^n(X)$, the set $G$ is a point of $\PS^{n+1}(X)$. Put
\[
   K=c_{n+1}(G).
\]
The carrier criterion gives $G\in\PS^{n+1}(K)$; equivalently, $G$ is a compact saturated subset of $\PS^n(K)$. By Lemma~\ref{lem:iterate-principal}, $\PS^n(K)$ is saturated in $\PS^n(X)$, and Lemma~\ref{lem:kirr-subspace} therefore shows that $G$ remains $k$-irreducible in $\PS^n(K)$. Co-sobriety of $\PS^n(K)$ gives an $A\in\PS^n(K)$ such that
\[
   G=\up{\PS^n(K)}A.
\]
Since $\PS^n(K)$ is saturated in $\PS^n(X)$ and contains $A$, one has $\up{\PS^n(X)}A\subseteq\PS^n(K)$. Hence the principal saturation of $A$ is the same in the subspace and in the ambient space, and therefore
\[
   G=\up{\PS^n(X)}A.
\]
Thus $\PS^n(X)$ is co-sober.
\end{proof}

\section{Compact Hausdorff carriers}

We first prove the compact Hausdorff case directly. Let $C$ be compact Hausdorff, and let $\mathcal K(C)$ be the set of all nonempty compact subsets of $C$. Since $C$ is $T_1$, every subset is saturated, so $\mathcal K(C)=\Q(C)$ as sets. The full Vietoris topology on $\mathcal K(C)$ is generated by
\[
   \Box U=\{K\in\mathcal K(C):K\subseteq U\},
   ~
   \Diamond U=\{K\in\mathcal K(C):K\cap U\neq\varnothing\},
\]
where $U$ is open in $C$.

\begin{lemma}[{\cite[Theorem~4.9.12]{Michael}}]\label{lem:vietoris-compact}
If $C$ is compact Hausdorff, then its full Vietoris hyperspace $\mathcal K(C)$ is compact Hausdorff.
\end{lemma}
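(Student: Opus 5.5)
Although this lemma is cited from Michael, here is the proof I would give. I would show Hausdorffness and compactness separately, using only the subbasic sets $\Box U$ and $\Diamond U$.

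\emph{Hausdorff.} Let $K\neq L$ in $\mathcal K(C)$, and say $x\in K\setminus L$ (the other case is symmetric). Since $C$ is compact Hausdorff, the point $x$ and the compact set $L$ have disjoint open neighbourhoods $U\ni x$ and $V\supseteq L$. Then $K\in\Diamond U$ and $L\in\Box V$, and these two sets are disjoint: a compact set inside $V$ cannot meet $U$.

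\emph{Compactness.} I would use the Alexander subbase lemma. Let $\{\Box U_i\}_{i\in I}\cup\{\Diamond V_j\}_{j\in J}$ be a cover of $\mathcal K(C)$ by subbasic open sets; I want a finite subcover. Put $F=C\setminus\bigcup_{j}V_j$, which is closed, hence compact. There are two cases.
\begin{itemize}
\item If $F=\varnothing$, then $\{V_j\}$ covers $C$. By compactness, finitely many $V_{j_1},\dots,V_{j_m}$ cover $C$. Every nonempty $K$ meets some $V_{j_k}$, so $\Diamond V_{j_1},\dots,\Diamond V_{j_m}$ already cover $\mathcal K(C)$.
\item If $F\neq\varnothing$, then $F\in\mathcal K(C)$ and $F$ lies in no $\Diamond V_j$. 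So $F\in\Box U_{i_0}$ for some $i_0$, that is, $F\subseteq U_{i_0}$. Now $C\setminus U_{i_0}$ is compact and contained in $\bigcup_j V_j$, so it is covered by finitely many $V_{j_1},\dots,V_{j_m}$. Take any $K\in\mathcal K(C)$. Either $K\subseteq U_{i_0}$, or $K$ contains a point outside $U_{i_0}$, and then $K$ meets some $V_{j_k}$. Hence $\Box U_{i_0},\Diamond V_{j_1},\dots,\Diamond V_{j_m}$ form a finite subcover.
\end{itemize}

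The only delicate point is the choice of the test set $F$ in the compactness step, which is exactly what makes the Alexander lemma apply. Everything else is routine separation in a compact Hausdorff space.
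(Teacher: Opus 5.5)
Your proof is correct, and it takes a different route from the paper only because the paper gives no proof here: it cites Michael, so your argument is a self-contained replacement for that citation. The Hausdorff half is the standard separation of a point from a compact set, $K\in\Diamond U$ and $L\in\Box V$ with $U\cap V=\varnothing$. The compactness half is the standard Alexander subbase argument. Your test set $F=C\setminus\bigcup_j V_j$ also handles the degenerate cases correctly, namely $J=\varnothing$ and $C\setminus U_{i_0}=\varnothing$.

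What your version adds over the citation is a clear view of which hypothesis does what. Compactness of $\mathcal K(C)$ uses only that closed subsets of $C$ are compact, i.e.\ compactness of $C$. Hausdorffness of $\mathcal K(C)$ uses only Hausdorffness of $C$. This split matches how the lemma is actually used. In the proof of Theorem~\ref{thm:compact-Hausdorff}, only the compactness half is invoked, to conclude that the full-Vietoris closed families $\mathcal A_i$ are compact. The Hausdorff separation the argument needs enters separately, through Lemma~\ref{lem:family-vietoris-closed}, which is proved directly. The paper's citation is shorter but hides this split.
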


\begin{lemma}\label{lem:family-vietoris-closed}
Let $C$ be compact Hausdorff and $\mathcal A\in\Q(\PS(C))$. Then $\mathcal A$ is closed in the full Vietoris hyperspace $\mathcal K(C)$.
\end{lemma}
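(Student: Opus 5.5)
We show that the complement of $\mathcal A$ in $\mathcal K(C)$ is open in the full Vietoris topology. Fix $L\in\mathcal K(C)\setminus\mathcal A$. Since $C$ is $T_1$, the specialization order of $\PS(C)$ is reverse inclusion and $\mathcal A$ is saturated, i.e.\ an upper set: if $M\in\mathcal A$ and $M'\subseteq M$, then $M'\in\mathcal A$. Hence $L\notin\mathcal A$ means that no member of $\mathcal A$ contains $L$. So for every $M\in\mathcal A$ there is a point $x_M\in L\setminus M$.

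The task is to turn these pointwise witnesses into a Vietoris neighbourhood of $L$ that misses $\mathcal A$, using compactness of $\mathcal A$ in the upper Vietoris topology. For each $M\in\mathcal A$, use Hausdorffness of $C$ and compactness of $M$ to separate $x_M$ from $M$. This gives disjoint open sets $W_M\ni x_M$ and $V_M\supseteq M$. Then $M\in\Box V_M$, so the sets $\Box V_M$ cover $\mathcal A$, and by compactness there are $M_1,\dots,M_k$ with $\mathcal A\subseteq\Box V_{M_1}\cup\dots\cup\Box V_{M_k}$. Put
\[
\mathcal N=\Diamond W_{M_1}\cap\dots\cap\Diamond W_{M_k}.
\]
This is open in the full Vietoris topology, and it contains $L$ because $x_{M_i}\in L\cap W_{M_i}$. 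Now let $N\in\mathcal N\cap\mathcal A$. Then $N\in\Box V_{M_i}$ for some $i$, so $N\subseteq V_{M_i}$. Since $V_{M_i}\cap W_{M_i}=\varnothing$, we get $N\cap W_{M_i}=\varnothing$, contradicting $N\in\Diamond W_{M_i}$. So $\mathcal N\cap\mathcal A=\varnothing$, and the complement of $\mathcal A$ is open.

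The only delicate point is to get the quantifiers in the right order. The separating sets must be chosen per member $M$ of $\mathcal A$, so that compactness of $\mathcal A$, taken for the upper Vietoris topology only, applies to the cover $\{\Box V_M\}$. The lower half $\Diamond$ of the full Vietoris topology then supplies the finitely many witnesses. We also need two routine facts: $\Box V$ is open in $\PS(C)$, and $\mathcal K(C)=\Q(C)$ as sets. Both were noted just before Lemma~\ref{lem:vietoris-compact}.
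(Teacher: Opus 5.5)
Your proposal is correct and follows essentially the same route as the paper: choose a witness $x_M\in L\setminus M$ for each $M\in\mathcal A$, separate it from $M$ by disjoint open sets $W_M$ and $V_M$, and extract a finite subcover of $\{\Box V_M\}$ using upper-Vietoris compactness of $\mathcal A$. The intersection $\bigcap_i\Diamond W_{M_i}$ is then a full Vietoris neighbourhood of $L$ missing $\mathcal A$. The only difference is that the paper spells out the point-versus-compact-set separation via finitely many Hausdorff separations, whereas you cite it as a standard fact.
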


\begin{proof}
Saturation in the reverse-inclusion order means that $\mathcal A$ is hereditary under nonempty compact subsets. Let $F\in\mathcal K(C)\setminus\mathcal A$. For each $K\in\mathcal A$, one has $F\nsubseteq K$, for otherwise heredity would imply $F\in\mathcal A$. Choose $x_K\in F\setminus K$.

For every $y\in K$, choose disjoint open neighborhoods $O_y$ of $y$ and $N_y$ of $x_K$. Compactness of $K$ gives $y_1,\ldots,y_s\in K$ with
\[
   K\subseteq O_{y_1}\cup\cdots\cup O_{y_s}.
\]
Thus
\[
   V_K=O_{y_1}\cup\cdots\cup O_{y_s},
   ~
   W_K=N_{y_1}\cap\cdots\cap N_{y_s}
\]
are disjoint open sets satisfying $K\subseteq V_K$ and $x_K\in W_K$. The sets $\Box V_K$ cover $\mathcal A$ in the upper Vietoris topology. Compactness of $\mathcal A$ yields $K_1,\ldots,K_m\in\mathcal A$ such that
\[
   \mathcal A\subseteq\Box V_{K_1}\cup\cdots\cup\Box V_{K_m}.
\]
Then
\[
   N=\bigcap_{j=1}^m\Diamond W_{K_j}
\]
is a full Vietoris neighborhood of $F$. If $L\in\mathcal A$, then $L\subseteq V_{K_j}$ for some $j$, and therefore $L\cap W_{K_j}=\varnothing$. Thus $N\cap\mathcal A=\varnothing$, proving that $\mathcal A$ is closed.
\end{proof}

\begin{theorem}\label{thm:compact-Hausdorff}
Let $C$ be compact Hausdorff and $\mathcal A\in\KIrr(\PS(C))$. Then
\begin{equation}\label{eq:union-generator}
   \mathcal A
   =\up{\PS(C)}\!\left(\bigcup\mathcal A\right).
\end{equation}
Consequently, $\PS(C)$ is co-sober.
\end{theorem}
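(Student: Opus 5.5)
Put $M=\bigcup\mathcal A$. By Lemma~\ref{lem:union-map}, $M=\mu_C(\mathcal A)$ is a nonempty compact subset of $C$, so $M\in\PS(C)$. Since the order on $\PS(C)$ is reverse inclusion, $\up{\PS(C)}M=\{L\in\mathcal K(C):L\subseteq M\}$. Every member of $\mathcal A$ is contained in $M$, so $\mathcal A\subseteq\up{\PS(C)}M$. Conversely, because $\mathcal A$ is saturated, hence hereditary under nonempty compact subsets, the reverse inclusion will follow once we know $M\in\mathcal A$. The whole proof therefore reduces to showing that a $k$-irreducible family contains its union.

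To get $M\in\mathcal A$, I will use Lemma~\ref{lem:family-vietoris-closed}: $\mathcal A$ is closed in the full Vietoris hyperspace $\mathcal K(C)$. So it suffices to show that every basic full-Vietoris neighbourhood of $M$ meets $\mathcal A$. Such a neighbourhood has the form $\Box U\cap\Diamond W_1\cap\cdots\cap\Diamond W_r$ with $M\subseteq U$ and $M\cap W_i\neq\varnothing$. Each $K\in\mathcal A$ already lies in $\Box U$, so we need a single $K\in\mathcal A$ that meets every $W_i$.

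This is the step where $k$-irreducibility enters through a finite decomposition. For each $i$, define
\[
\mathcal A_i=\{K\in\mathcal A:K\cap W_i=\varnothing\}=\mathcal A\cap\Box(C\setminus W_i).
\]
Each $\mathcal A_i$ is saturated, being hereditary. It is also compact in the upper Vietoris topology: $\Box(C\setminus W_i)$ is closed in the full Vietoris topology, so $\mathcal A_i$ is closed in the compact Hausdorff space $\mathcal K(C)$ (Lemma~\ref{lem:vietoris-compact}), hence full-Vietoris compact, hence upper-Vietoris compact, since the upper topology is coarser. Each $\mathcal A_i$ is a proper subset of $\mathcal A$, because some point $x\in M\cap W_i$ lies in some $K\in\mathcal A$, and that $K$ is not in $\mathcal A_i$. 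Now suppose no member of $\mathcal A$ meets all the $W_i$. Then $\mathcal A=\mathcal A_1\cup\cdots\cup\mathcal A_r$, with empty $\mathcal A_i$ discarded. By induction, $k$-irreducibility extends to finite unions of members of $\Q(\PS(C))$, so $\mathcal A=\mathcal A_i$ for some $i$, a contradiction. The main care points are this compactness of $\mathcal A_i$ and the bookkeeping for empty pieces. The case $r=0$ is trivial.

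Hence $M\in\mathcal A$, which gives \eqref{eq:union-generator}. For co-sobriety, every $k$-irreducible family is then the principal saturation of the point $M$. Uniqueness of the generating point follows because $\PS(C)$ is $T_0$, its specialization order being the partial order of reverse inclusion.
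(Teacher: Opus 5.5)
Your proof is correct and follows essentially the same route as the paper. Both use closedness of $\mathcal A$ in the full Vietoris hyperspace, the finite decomposition $\mathcal A_i=\{K\in\mathcal A:K\cap W_i=\varnothing\}$ made upper-Vietoris compact via compactness of $\mathcal K(C)$, and $k$-irreducibility to force $\bigcup\mathcal A\in\mathcal A$. The only cosmetic difference is that the paper first passes to $\PS(D)$ with $D=\bigcup\mathcal A$, so that the box component of the neighbourhood becomes trivial, and argues by contradiction; you stay in $\mathcal K(C)$ and note directly that $\mathcal A\subseteq\Box U$.
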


\begin{proof}
Put $D=\bigcup\mathcal A$. By Lemma~\ref{lem:union-map}, $D$ is a nonempty compact subset of $C$. Hence $D$ is compact Hausdorff. Every member of $\mathcal A$ is contained in $D$, so $\mathcal A\subseteq\PS(D)$. By \eqref{eq:PSK-principal}, $\PS(D)$ is a saturated subspace of $\PS(C)$; Lemmas~\ref{lem:saturated-subspace} and \ref{lem:kirr-subspace} show that $\mathcal A$ is a $k$-irreducible compact saturated subset of $\PS(D)$. By Lemma~\ref{lem:family-vietoris-closed}, $\mathcal A$ is closed in $\mathcal K(D)$.

Suppose that $D\notin\mathcal A$. Since $\mathcal A$ is full-Vietoris closed, there is a basic neighborhood
\[
   \Box V\cap\bigcap_{i=1}^r\Diamond U_i
\]
of the point $D$ that is disjoint from $\mathcal A$, where $V,U_1,\ldots,U_r$ are open in $D$. The condition $D\in\Box V$ forces $V=D$, so the box condition is the whole hyperspace. Moreover, $r\geq1$, because $\mathcal A$ is nonempty. Thus each $U_i$ is nonempty and
\begin{equation}\label{eq:diamond-separation}
   D\in\bigcap_{i=1}^r\Diamond U_i,
   ~
   \mathcal A\cap\bigcap_{i=1}^r\Diamond U_i=\varnothing.
\end{equation}
For $1\leq i\leq r$, let
\[
   \mathcal A_i=\{K\in\mathcal A:K\cap U_i=\varnothing\}.
\]
Then $\mathcal A=\mathcal A_1\cup\cdots\cup\mathcal A_r$. After removing empty terms, each $\mathcal A_i$ is hereditary, and
\[
   \mathcal A_i
   =\mathcal A\cap\bigl(\mathcal K(D)\setminus\Diamond U_i\bigr)
\]
is full-Vietoris closed. Since $\mathcal K(D)$ is compact by Lemma~\ref{lem:vietoris-compact}, $\mathcal A_i$ is compact in the full Vietoris topology and hence in the coarser upper Vietoris topology. Heredity means precisely that it is saturated in $\PS(D)$, so $\mathcal A_i\in\Q(\PS(D))$.

A $k$-irreducible compact saturated set that is a finite union of compact saturated sets equals one of them, by induction on the number of terms. Hence $\mathcal A=\mathcal A_j$ for some $j$. Every member of $\mathcal A$ then misses $U_j$, so their union $D$ misses $U_j$, contradicting \eqref{eq:diamond-separation}. Therefore $D\in\mathcal A$.

Every member of $\mathcal A$ is contained in $D$, while heredity and $D\in\mathcal A$ imply that every nonempty compact subset of $D$ belongs to $\mathcal A$. Hence
\[
   \mathcal A
   =\{K\in\mathcal K(C):K\subseteq D\}
   =\up{\PS(C)}D.
\]
\end{proof}

\begin{corollary}\label{cor:first-general}
Let $X$ be a $T_0$-space such that every compact saturated subspace of $X$ is Hausdorff. Then, for every $\mathcal A\in\KIrr(\PS(X))$,
\[
   \mathcal A
   =\up{\PS(X)}\!\left(\bigcup\mathcal A\right).
\]
In particular, $\PS(X)$ is co-sober.
\end{corollary}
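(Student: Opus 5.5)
The plan is to reduce to Theorem~\ref{thm:compact-Hausdorff} by localizing, exactly as in the proof of Theorem~\ref{thm:localization} with $n=1$, while keeping track of the explicit generator. Let $\mathcal A\in\KIrr(\PS(X))$ and put $D=\bigcup\mathcal A=c_2(\mathcal A)$. By Lemma~\ref{lem:union-map}, $D\in\Q(X)$. By the hypothesis, $D$ with its subspace topology is Hausdorff, and it is compact, so $D$ is a compact Hausdorff space.

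Next I will place $\mathcal A$ inside $\PS(D)$. Every member of $\mathcal A$ is contained in $D$, so the carrier criterion (Theorem~\ref{thm:carrier} with $n=2$), or directly Lemma~\ref{lem:saturated-subspace}, shows that $\mathcal A$ is a compact saturated subset of $\PS(D)$. Here $\PS(D)$ is identified via \eqref{eq:PSK-principal} with the saturated subspace $\up{\PS(X)}D$ of $\PS(X)$. Lemma~\ref{lem:kirr-subspace} then gives $\mathcal A\in\KIrr(\PS(D))$. One point needs a quick check: since $D$ is saturated in $X$ and Hausdorff, the saturated subsets of $D$ are all of its subsets. This causes no conflict, because Lemma~\ref{lem:saturated-subspace} only needs $D$ saturated in $X$. (Saturation in $X$ restricted to $D$ is trivial here, since points of $D$ have principal upsets inside $D$ that must be singletons by $T_1$-ness of $D$.)

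Now I apply Theorem~\ref{thm:compact-Hausdorff} with $C=D$. The union of $\mathcal A$ computed in $D$ is again $D$, so $\mathcal A=\up{\PS(D)}D$. Finally, I transport this back to $\PS(X)$. Since $\PS(D)$ is saturated in $\PS(X)$ and contains $D$, we have $\up{\PS(X)}D\subseteq\PS(D)$, and the two principal saturations coincide. Both are simply $\{K\in\Q(X):K\subseteq D\}$ by \eqref{eq:smyth-order}. Hence $\mathcal A=\up{\PS(X)}(\bigcup\mathcal A)$.

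For co-sobriety, uniqueness of the generating point follows because $\PS(X)$ is $T_0$: if $\up{}K=\up{}L$ then $K\subseteq L$ and $L\subseteq K$. I expect no real obstacle. The only delicate step is verifying that the subspace identifications of Lemma~\ref{lem:saturated-subspace} are compatible with applying Theorem~\ref{thm:compact-Hausdorff} to $D$ as an abstract compact Hausdorff space, in particular that $\Q(D)=\mathcal K(D)$ and that the upper Vietoris topologies agree. This is immediate from that lemma.
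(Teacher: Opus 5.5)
Your proposal is correct and follows essentially the same route as the paper. You set $D=\bigcup\mathcal A\in\Q(X)$, keep $\mathcal A$ $k$-irreducible in the saturated subspace $\PS(D)$ via Lemma~\ref{lem:kirr-subspace}, apply Theorem~\ref{thm:compact-Hausdorff} to the compact Hausdorff space $D$, and note that principal upsets of $D$ agree in $\PS(D)$ and $\PS(X)$. Your extra remarks (the carrier criterion with $n=2$, the maximality of points of $D$, and uniqueness via $T_0$) are correct but not needed.
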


\begin{proof}
Let $D=\bigcup\mathcal A$. By Lemma~\ref{lem:union-map}, $D\in\Q(X)$, and $D$ is Hausdorff by hypothesis. The family $\mathcal A$ lies in the saturated subspace $\PS(D)$ and remains $k$-irreducible there by Lemma~\ref{lem:kirr-subspace}. Apply Theorem~\ref{thm:compact-Hausdorff}. Since $\PS(D)$ is saturated in $\PS(X)$, the principal upward closure is the same in the subspace and in the ambient space.
\end{proof}

\begin{corollary}\label{cor:first-k1}
If $X$ is $k_1$-Hausdorff, then, for every $\mathcal A\in\KIrr(\PS(X))$,
\[
   \mathcal A
   =\up{\PS(X)}\!\left(\bigcup\mathcal A\right).
\]
In particular, $\PS(X)$ is co-sober.
\end{corollary}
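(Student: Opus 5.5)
The plan is to reduce Corollary~\ref{cor:first-k1} directly to Corollary~\ref{cor:first-general}. It suffices to check that every compact saturated subspace of a $k_1$-Hausdorff space is Hausdorff. Once that is done, both the displayed identity $\mathcal A=\up{\PS(X)}\bigl(\bigcup\mathcal A\bigr)$ and the co-sobriety of $\PS(X)$ follow verbatim from Corollary~\ref{cor:first-general}.

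For the verification, let $K\in\Q(X)$. By definition $K$ is compact as a subspace of $X$. The $k_1$-Hausdorff hypothesis says that every compact subspace of $X$ is Hausdorff in its subspace topology, so $K$ is Hausdorff. Nothing about saturation is needed here. In fact, by Lemma~\ref{lem:separation}, $X$ is $T_1$, so every subset is saturated and $\Q(X)$ is just the set of nonempty compact subsets; this observation is not required for the argument.

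There is no real obstacle. The only point to check is that the notion of ``compact subspace'' in the definition of $k_1$-Hausdorff agrees with compactness of $K$ as used in $\Q(X)$. It does, because compactness of a subset is intrinsic to its subspace topology. With this in hand, apply Corollary~\ref{cor:first-general} to $X$ to conclude.
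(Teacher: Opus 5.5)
Your proof is correct and is exactly the reduction the paper intends: it gives no separate proof for this corollary and places it right after Corollary~\ref{cor:first-general}, using the same one-line argument it writes out for Corollary~\ref{cor:k1}. Namely, every $K\in\Q(X)$ is a compact subspace and hence Hausdorff, so Corollary~\ref{cor:first-general} applies verbatim.
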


\begin{remark}\label{rem:first-direct}
The first-iterate conclusion for Hausdorff and metrizable spaces already follows from Corollary~\ref{cor:first-k1}. In particular, it is independent of the compact-localization theorem for higher iterates and of the stable-compactness result used in the next section.
\end{remark}

\section{Stable compact carriers and finite iterates}

The localization theorem isolates the precise compact input needed for all finite levels.

\begin{theorem}[Finite iterates over stably compact carriers]\label{thm:stable-carriers}
Let $X$ be a $T_0$-space such that every $K\in\Q(X)$ is stably compact as a subspace of $X$. Then $\PS^n(X)$ is co-sober for every finite $n\geq1$.
\end{theorem}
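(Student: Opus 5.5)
By Theorem~\ref{thm:localization}, it suffices to show that $\PS^n(K)$ is co-sober for every $K\in\Q(X)$ and every finite $n\geq1$. So we fix $K\in\Q(X)$ and work entirely inside the subspace $K$. Lemma~\ref{lem:saturated-subspace} and its iterates identify the intrinsic Smyth iterates of the space $K$ with the subspaces $\PS^n(K)$ of $\PS^n(X)$. Therefore co-sobriety may be checked intrinsically, treating $K$ as a space in its own right.

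By hypothesis, $K$ is stably compact. We then induct on $n$ using Lemma~\ref{lem:stable-preserved}. Since $\PS^0(K)=K$ is stably compact and $\PS^{m+1}(K)=\PS(\PS^m(K))$, each $\PS^n(K)$ is stably compact for $n\geq0$. For this induction we need that the iterates are built from the intrinsic topology of $K$, not from the ambient $X$. The remark in the previous paragraph secures this.

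Finally, a stably compact space is by definition locally compact and coherent, and it is $T_0$ because it is sober. Lemma~\ref{lem:lc-coherent} therefore shows that $\PS^n(K)$ is co-sober. Since $K\in\Q(X)$ was arbitrary, condition (2) of Theorem~\ref{thm:localization} holds for each $n\geq1$, and hence $\PS^n(X)$ is co-sober.

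The only delicate point is the identification step. We must know that the Smyth iterate of the subspace $K$ coincides, as a topological space, with the subspace $\PS^n(K)\subseteq\PS^n(X)$ at every level. The plan is to apply Lemma~\ref{lem:saturated-subspace} repeatedly, using Lemma~\ref{lem:iterate-principal} to ensure that $\PS^{n}(K)$ is saturated in $\PS^{n}(X)$ at each stage. With that in hand, the rest is a direct chaining of the cited results.
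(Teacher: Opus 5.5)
Your proof is correct and matches the paper's: reduce to the carriers via Theorem~\ref{thm:localization}, apply Lemma~\ref{lem:stable-preserved} inductively to the stably compact space $K$, and conclude with Lemma~\ref{lem:lc-coherent}. The extra care you take in identifying the intrinsic Smyth iterates of $K$ with the saturated subspaces $\PS^n(K)\subseteq\PS^n(X)$ is justified, and the paper handles it only implicitly, through its remark after the definition of $\PS^n$ that repeated use of Lemma~\ref{lem:saturated-subspace} gives this identification.
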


\begin{proof}
Fix $K\in\Q(X)$. By hypothesis, $K$ is stably compact. Lemma~\ref{lem:stable-preserved} implies inductively that $\PS^n(K)$ is stably compact for every finite $n\geq1$. Hence each $\PS^n(K)$ is locally compact, coherent and $T_0$, and is therefore co-sober by Lemma~\ref{lem:lc-coherent}. The result now follows from Theorem~\ref{thm:localization}.
\end{proof}

\begin{corollary}\label{cor:Hausdorff-carriers}
Let $X$ be a $T_0$-space such that every compact saturated subspace of $X$ is Hausdorff. Then $\PS^n(X)$ is co-sober for every finite $n\geq1$.
\end{corollary}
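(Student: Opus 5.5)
The plan is to reduce the corollary to Theorem~\ref{thm:stable-carriers}. It then suffices to show that every $K\in\Q(X)$ is stably compact as a subspace of $X$, given that $K$ is Hausdorff. Fix $K\in\Q(X)$. Since compactness passes to the subspace, $K$ is a compact Hausdorff space, and we check the four defining conditions of stable compactness in turn.

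\emph{Compactness} is immediate. \emph{Local compactness} is the classical fact that a compact Hausdorff space is regular: given $x\in U$ open, separate $x$ from the compact set $K\setminus U$ by disjoint open sets $O\ni x$ and $W\supseteq K\setminus U$. Then $x\in O\subseteq K\setminus W\subseteq U$, and $K\setminus W$ is closed in a compact space, hence compact. Because $K$ is $T_1$, every subset of $K$ is saturated, so this gives a base of compact saturated neighbourhoods. \emph{Coherence} follows because compact subsets of a Hausdorff space are closed, so the intersection of two compact saturated subsets is a closed subset of the compact space $K$ and is therefore compact. \emph{Sobriety}: let $F\subseteq K$ be irreducible and closed. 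If $F$ contained two distinct points $x\neq y$, take disjoint open neighbourhoods $O_x,O_y$. Then $F\subseteq (K\setminus O_x)\cup(K\setminus O_y)$, a union of two closed sets, yet $F$ is contained in neither, contradicting irreducibility. Hence $F=\{x\}=\cl_K\{x\}$, and uniqueness of $x$ follows from $T_0$.

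Having shown that $K$ is stably compact, Theorem~\ref{thm:stable-carriers} applies directly and yields co-sobriety of $\PS^n(X)$ for all finite $n\geq1$. There is no real obstacle here. The only point needing care is that the hypothesis concerns $K$ with its subspace topology, which is exactly the topology in which stable compactness is required in Theorem~\ref{thm:stable-carriers}. It is also worth noting that saturation in $K$ is trivial because $K$ is $T_1$, so the compact sets produced above are automatically compact saturated.
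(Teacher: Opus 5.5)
Your proposal is correct and follows the paper's proof: the paper states that every compact Hausdorff space is stably compact and then applies Theorem~\ref{thm:stable-carriers}. Your check of compactness, local compactness, coherence and sobriety simply writes out that standard fact.
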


\begin{proof}
Every compact Hausdorff space is stably compact. Apply Theorem~\ref{thm:stable-carriers}.
\end{proof}

\begin{corollary}\label{cor:k1}
If $X$ is $k_1$-Hausdorff, then $\PS^n(X)$ is co-sober for every finite $n\geq1$.
\end{corollary}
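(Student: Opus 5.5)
The plan is to reduce Corollary~\ref{cor:k1} directly to Corollary~\ref{cor:Hausdorff-carriers}. For that it suffices to check that every compact saturated subspace of a $k_1$-Hausdorff space $X$ is Hausdorff. We also need $X$ to be $T_0$, so that the Smyth iterates are defined in the paper's framework.

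First, by Lemma~\ref{lem:separation}, a $k_1$-Hausdorff space is $T_1$, hence in particular $T_0$. So $X$ falls within the scope of Corollary~\ref{cor:Hausdorff-carriers}. Since $X$ is $T_1$, the specialization order is equality, and every subset is saturated. Thus $\Q(X)$ is just the set of nonempty compact subsets of $X$.

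Second, take any $K\in\Q(X)$. It is a compact subspace of $X$ (compactness of a subset is the same as compactness of the subspace), so by the definition of $k_1$-Hausdorff it is Hausdorff. Hence every compact saturated subspace of $X$ is Hausdorff. Corollary~\ref{cor:Hausdorff-carriers} then gives that $\PS^n(X)$ is co-sober for every finite $n\geq1$.

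There is no real obstacle here. The only point to be careful about is that the definition of $k_1$-Hausdorff refers to compact subspaces, whereas the hypothesis of Corollary~\ref{cor:Hausdorff-carriers} refers to compact \emph{saturated} subspaces. The latter form a subclass of the former, so the needed inclusion goes in the right direction and no appeal to the $T_1$ property is needed for it. The $T_1$ property is only used to ensure $X$ is $T_0$.
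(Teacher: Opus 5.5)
Your proof is correct and is essentially the paper's own argument: every $K\in\Q(X)$ is a compact subspace of $X$, hence Hausdorff, so Corollary~\ref{cor:Hausdorff-carriers} applies. Your extra check that $X$ is $T_0$, via Lemma~\ref{lem:separation}, makes explicit a standing hypothesis that the paper leaves implicit.
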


\begin{proof}
Every $K\in\Q(X)$ is a compact subspace of $X$ and hence Hausdorff. Apply Corollary~\ref{cor:Hausdorff-carriers}.
\end{proof}

\begin{corollary}\label{cor:Hausdorff-metric}
If $X$ is Hausdorff, then $\PS^n(X)$ is co-sober for every finite $n\geq1$. In particular, $\PS(X)$ is co-sober. The same conclusion holds for every metrizable space.
\end{corollary}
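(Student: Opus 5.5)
The plan is to reduce Corollary~\ref{cor:Hausdorff-metric} to Corollary~\ref{cor:k1} through the separation hierarchy of Lemma~\ref{lem:separation}. The statement has three parts: all finite iterates for Hausdorff $X$, the first iterate as the case $n=1$, and metrizable spaces.

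First, I take $X$ Hausdorff. Then $X$ is $T_0$, and by Lemma~\ref{lem:separation} it is $k_1$-Hausdorff. This also follows directly, since every subspace of a Hausdorff space is Hausdorff, so in particular every compact subspace is. Corollary~\ref{cor:k1} then gives co-sobriety of $\PS^n(X)$ for every finite $n\geq1$. The claim about $\PS(X)$ is the case $n=1$, using $\PS^1(X)=\PS(X)$. For this case one can alternatively cite Corollary~\ref{cor:first-k1}, which also identifies the generator as $\bigcup\mathcal A$, in line with Remark~\ref{rem:first-direct}.

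Second, every metrizable space is Hausdorff: distinct points $x\neq y$ are separated by the open balls of radius $d(x,y)/2$. The metrizable case therefore follows from the Hausdorff case just proved.

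There is no genuine obstacle here. The only point needing care is that the hypotheses of Corollary~\ref{cor:k1} only require $X$ to be $T_0$ together with the $k_1$-Hausdorff property, and both hold for Hausdorff spaces. All of the substantive work was already done in Theorem~\ref{thm:stable-carriers} and Theorem~\ref{thm:localization}.
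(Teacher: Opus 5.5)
Your proposal is correct and matches the argument the paper intends. The paper gives no separate proof of this corollary: it follows from Corollary~\ref{cor:k1}, since Hausdorff spaces are $k_1$-Hausdorff (Lemma~\ref{lem:separation}) and metrizable spaces are Hausdorff, with the case $n=1$ giving $\PS(X)$.
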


The case $n=1$ gives affirmative answers to the Hausdorff and metrizable questions mentioned in the Introduction.

\begin{corollary}\label{cor:sober-cosober}
If $X$ is Hausdorff, then $\PS^n(X)$ is both sober and co-sober for every finite $n\geq1$.
\end{corollary}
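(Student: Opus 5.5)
The statement has two halves, and I would handle them separately. Co-sobriety of $\PS^n(X)$ for every finite $n\geq1$ is exactly Corollary~\ref{cor:Hausdorff-metric}, so nothing new is needed there. Sobriety I would obtain by induction on $n$ using Theorem~\ref{thm:sob-smyth}. First, every Hausdorff space $X$ is sober. Take an irreducible closed set $F$. If $F$ contained two distinct points $x,y$, choose disjoint open sets $U\ni x$ and $V\ni y$. Then
\[
F\subseteq (X\setminus U)\cup(X\setminus V),
\]
and $F$ is contained in neither closed set, which contradicts irreducibility. So $F$ is a singleton $\{x\}=\cl_X\{x\}$. Uniqueness of $x$ follows from $T_0$.

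For the inductive step, suppose $\PS^{n}(X)$ is sober, starting from $\PS^0(X)=X$. The implication (1)$\Rightarrow$(2) of Theorem~\ref{thm:sob-smyth}, applied to the $T_0$-space $\PS^n(X)$, gives that $\PS^{n+1}(X)=\PS(\PS^n(X))$ is sober. To apply it I need $\PS^n(X)$ to be $T_0$. This holds because the specialization order \eqref{eq:smyth-order} is reverse inclusion, which is antisymmetric on saturated sets. Concretely, if $K\neq L$, say $x\in K\setminus L$, then $L\subseteq X\setminus\cl_X\{x\}$: a point of $L$ in $\cl_X\{x\}$ would be below $x$, and saturation of $L$ would then put $x$ in $L$. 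Hence the open set $\Box_X(X\setminus\cl_X\{x\})$ contains $L$ but not $K$. The same argument works at every level.

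Combining the two halves gives both properties at each finite level $n\geq1$. I expect no real obstacle. The only points to check are that Theorem~\ref{thm:sob-smyth} is used in the direction sober $\Rightarrow$ sober Smyth space at each stage, and that the Hausdorff-to-sober step is recorded explicitly. The metrizable case follows in the same way, although the statement only asks for Hausdorff spaces.
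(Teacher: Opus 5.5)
Your proposal is correct and matches the paper's proof: co-sobriety comes from Corollary~\ref{cor:Hausdorff-metric}, and sobriety comes from Hausdorff $\Rightarrow$ sober followed by iterating the direction (1)$\Rightarrow$(2) of Theorem~\ref{thm:sob-smyth}. Your explicit checks of the Hausdorff-to-sober step and of the $T_0$ property of each $\PS^n(X)$ are correct details that the paper leaves implicit.
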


\begin{proof}
Every Hausdorff space is sober. By Theorem~\ref{thm:sob-smyth}, sobriety is preserved by the Smyth power space, so $\PS^n(X)$ is sober for every finite $n\geq1$. Co-sobriety follows from Corollary~\ref{cor:Hausdorff-metric}.
\end{proof}

\begin{example}\label{ex:cocountable}
Let $X$ be an uncountable set with the co-countable topology. Any two nonempty open sets intersect, so $X$ is not Hausdorff. It is nevertheless $k_1$-Hausdorff \cite{ClontzWilliams}. For completeness, every compact subset is finite. Indeed, if $A\subseteq X$ is infinite, choose distinct points $a_1,a_2,\ldots$ in $A$ and put
\[
   U_n=X\setminus\{a_m:m\geq n\}.
\]
The sets $U_n$ form an open cover of $A$ with no finite subcover. Thus every compact subspace is finite and hence Hausdorff.

The space $X$ is not sober: $X$ itself is an irreducible closed set, but, since the topology is $T_1$, it is not the closure of a point. Hence every $\PS^n(X)$ with $n\geq1$ is co-sober by Corollary~\ref{cor:k1} and non-sober by repeated application of Theorem~\ref{thm:sob-smyth}.
\end{example}

\section{Conclusions}

The general preservation problem for co-sobriety under the Smyth power construction has a negative answer even for co-sober $T_1$-spaces \cite{XuXuJiQiu}. The results above identify a broad positive mechanism. Compact localization shows that every finite-level compact obstruction is supported on a canonical least compact saturated subspace of the original space. Thus co-sobriety of finite Smyth iterates is controlled entirely by compact saturated carriers.

The strongest general criterion obtained here is that all compact saturated carriers be stably compact. A convenient sufficient condition is that they be Hausdorff; this strictly extends the Hausdorff case, since every $k_1$-Hausdorff space satisfies it, and Example~\ref{ex:cocountable} is non-Hausdorff. The direct first-iterate argument gives, in addition, an explicit generator formula by taking unions.

The proof also indicates a boundary of the present method. For a general KC-space, compact saturated carriers need not be Hausdorff. The direct Vietoris proof uses Hausdorff separation inside the compact carrier, while the finite-iterate argument uses stable compactness. Neither argument therefore extends automatically to arbitrary KC-spaces. The carrier construction itself, however, is independent of separation assumptions and may be useful in other preservation problems for iterated power spaces.

\section*{Statements and Declarations}

\textbf{Competing interests.} The authors have no relevant financial or non-financial interests to disclose.

\vspace{0.1cm}

\textbf{Data availability.} No datasets were generated or analyzed during the current study.

\vspace{0.1cm}

\textbf{Declaration of generative AI and AI-assisted technologies in the manuscript preparation process.} During the preparation of this work, the authors used ChatGPT (OpenAI) for assisting with language editing and organization of the exposition. All mathematical statements, proofs, references, and the final text were checked and approved by the authors, who take full responsibility for the content.

\end{document}